\documentclass[preprint,12pt]{elsarticle}
\usepackage{mathtools}
\usepackage{booktabs}
\usepackage{graphicx} 
\usepackage{epstopdf} 
\usepackage{float} 
\usepackage{geometry} 
\usepackage{appendix} 
\usepackage{listings} 
\usepackage{setspace}
\usepackage{amssymb}
\usepackage{pdfpages}

\usepackage{xcolor} 
\usepackage{soul}   

\usepackage{tikz}
\usetikzlibrary{graphs, graphs.standard} 
\usepackage{tikz,mathpazo}
\usetikzlibrary{shapes.geometric, arrows}

\tikzset{%
	node/.style={circle, draw}, 
	edge/.style={->, thick}, 
	arrow/.style={line width=0.5pt}, 
}

\geometry{a4paper,scale=0.75}
\graphicspath{{qiquanfigs/}}
\counterwithin{figure}{section}

\newtheorem{lemma}{Lemma}[section]
\newtheorem{theorem}{Theorem}[section]
\newtheorem{corollary}{Corollary}[section]
\newtheorem{definition}{Definition}[section]
\newtheorem{proposition}{Proposition}[section]
\newtheorem{example}{Example}[section]
\newtheorem{proof}{Proof}[section]
\newtheorem{remark}{Remark}[section]

\def\bdefinition{\begin{definition}\sl{}\def\edefinition{\end{definition}}}

\usepackage{amssymb}

\begin{document}

\begin{frontmatter}

\title{On the critical group of the $k$-partite graph}

\author{Xinyu Dong}
\ead{dingzfcpp4141@126.com}

\affiliation{organization={College of Mathematics and Physics, Beijing University of Chemical Technology},
	addressline={15 North Third Ring East Road}, 
	city={Beijing},
	postcode={100029}, 
	state={Beijing},
	country={China}}

\author{Weili Guo\corref{cor1}}
\ead{guowl@mail.buct.edu.cn}

\cortext[cor1]{Corresponding author}
	
\author{Guangfeng Jiang}
\ead{jianggf@mail.buct.edu.cn }

\begin{abstract}
The critical group of a connected graph is closely related to the graph Laplacian, and is of high research value in combinatorics, algebraic geometry, statistical physics, and several other areas of mathematics. In this paper, we study the $k$-partite graphs and introduce an algorithm to get the structure of their critical groups by calculating the Smith normal forms of their graph Laplacians. When $k$ is from $2$ to $6$, we characterize the structure of the critical groups completely, which can generalize the results of the complete bipartite graphs. 
\end{abstract}

\begin{keyword}
	critical group, graph Laplacian, $k$-partite graphs, Smith normal form

\MSC[2020] 05C50 \sep 20K01

\end{keyword}

\end{frontmatter}

\section{Introduction}\label{Introduction}

Chip firing game is a discrete dynamical model	 studied by physicists, in the context of self-organized critical state. The basic rule of the model is that chips (sand, dollars) are exchanged between the sites in a network. When the model system reaches a particular state, even a very small perturbation can lead to collapse. For instance, the addition of a grain of sand can cause a massive avalanche.  In nature, there are a huge number of similar phenomena, such as fires, earthquakes, extinction of species and many others. The research of such phenomena is of vital significance, while a large number of scientists are interested and have achieved numerous results including a group structure, which is an important algebraic invariant associated with the chip-firing process. 

In 1990, Lorenzini \cite{lorenzini1991finite} named the group of components $\Phi(G) $ to approach the chip firing game from the viewpoint of arithmetic geometry. Dhar \cite{Dhar1990Self} defined the sandpile group from the perspective of physics. In 1997, Bacher \cite{bacher1997lattice} named this group Jacobian and Picard group, when working on the various lattices formed by graphs. In 1999, Biggs \cite{biggs1999chip} defined this group as critical group,  when doing research on the economic process  under the  theory  of chip firing.  

Furthermore, the critical group of a graph is strictly associated with the structure of the graph. From the Kirchoff's Matrix Tree Theorem \cite{biggs1993algebraic}, we get the following two  formulas.  

\begin{itemize}
	\item[(i)]  The order $\kappa(G)$  of the critical group of a graph $G$ is equal to the number of spanning trees in the graph,  $$\kappa(G)=(-1)^{i+j} \operatorname{det} \overline{L(G)}, $$
	
	where  $\overline{L(G)}$  is a reduced Laplacian matrix obtained from  $L(G)$  by striking out any row  $i$  and column  $j$.
	
	\item[(ii)] If the eigenvalues of  $L(G)$  are indexed  $\lambda_{1}, \ldots, \lambda_{n-1}, \lambda_{n}$, where  $n$ is the number of vertices of $G$ and  $\lambda_{n}=0$, then
	$$\kappa(G)=\frac{\lambda_{1} \cdots \lambda_{n-1}}{n} .$$
	
\end{itemize}

In (i), we note that the critical group can be used to study the corresponding graph. Besides, the critical group of a connected graph is a  finite Abelian group. In 1990, Rushanan \cite{rushanan1990combinatorial} found the comparable group related to the Smith normal form of adjacency matrices known as the Smith group. Then the algebraic structure of the critical group of a graph can be known from the Smith normal form of the Laplace matrix (or adjacency matrix). For a matrix, we can get the Smith normal form by the following row and column operations:

\begin{itemize}
	\item Add a non-zero integer multiple of one row (resp. column) to another row (resp. column), 
	
	\item Permute rows or colums, 
	
	\item Multiply a row or column by  $- 1$.
\end{itemize}

The critical group structures of some special  graphs are presently fully characterized, such as  the cycle graphs $C_{n}$ \cite{MERRIS1992181}, the complete graphs $K_{n}$ \cite{biggs1999chip}, the wheel graphs $W_{n}$ \cite{CORI2000447}, the bipartite graphs $K_{n_{1}, n_{2}}$ \cite{lorenzini1991finite},  the complete
multipartite graphs  $K_{n_{1}, \cdots , n_{k}}$  \cite{jacobson2003critical}, the de Bruijn graphs $DB(n,d)$ \cite{chan2015sandpile}, the M$\ddot{o}$bius ladders $M(n)$ \cite{deryagina2014jacobian}, the square cycles $C_{n}^{2}$ \cite{hou2006sandpile}, the threshold graphs \cite{christianson2002critical}, the $3 \times n$ twisted bracelets \cite{shen2008sandpile}, the n-cubes $Q_{n}$ \cite{bai2003critical}, the tree graphs \cite{levine2009sandpile}, the polygon flowers \cite{chen2019sandpile} and so on.
Moreover, there are also composite graphs such as the cartesian products of complete graphs \cite{jacobson2003critical}, $P_{4} \times C_{n}$ \cite{chen2008sandpile}, $K_{3} \times C_{n}$ \cite{2008On}, $K_{m} \times P_{n}$ \cite{liang2008critical},  $P_{m} \vee  P_{n}$ \cite{pan2011critical}  and so on. 

Base on the present researches, we study the  critical groups of a category of incomplete multipartite graphs which are introduced after the Definition \ref{d1.3}, and our work includes the results of the bipartite graphs \cite{lorenzini1991finite}. For the $k$-partite graph $G_{n_{1}, \ldots, {n_{k}}} $, we supply the algorithm of the critical groups. Furthermore, the specific abelian groups of $k$-partite graphs isomorphic to the critical groups are computed and listed, when $k=2, 3, \cdots , 6$.

This paper is organized as follows.  In the second section, we show the definitions and the invertible matrices associated with the row and column operations to get the simpler matrices $ L_{3},  L_{4} $, which can simplify the calculations to get the invariant factors of the critical groups $K(G_{n_{1}, \ldots, n_{k}})$. Through	the algorithms, we can achieve the structures of critical groups in the case $k=2, 3, \cdots , 6$ in the next sections. 

\section{The critical group of the $k$-partite graph $G_{n_{1}, \ldots, {n_{k}}} $}\label{sec2}
\bdefinition\label{d1.1}
Let $ G=(V, E)$  be a graph on  $n$  vertices. The \textbf{graph Laplacian } $L(G)$  is the  $n \times n$  matrix given by

\begin{equation*}
	L(G)_{i j}=\left\{\begin{array}{ll}
		-1, \quad \quad & i \neq j \text { and }\left\{v_{i}, v_{j}\right\} \in E; \\
		\operatorname{deg}\left(v_{i}\right), \quad \quad & i=j; \\
		0, \quad \quad & \text { otherwise. }
	\end{array}\right.
\end{equation*}

\edefinition

Let  $ A $  be the  $n \times n$  adjacency matrix of  $G$  and let $ D $ be the  $n \times n$  diagonal matrix with diagonal given by the degree sequence of  $G$. Then the above definition can be written as

$$L(G)=D-A . $$

When $ G$  is connected, the kernel of  $L(G)$  is spanned by the vectors in  $\mathbb{R}^{|V|}$  which are constant on the vertices. 

\bdefinition\label{d1.2}
Thinking of  $L(G)$  as a map  $\mathbb{Z}^{|V|} \rightarrow \mathbb{Z}^{|V|}$, its cokernel has the form

	$$\mathbb{Z}^{|V|} / \operatorname{im} L(G) \cong \mathbb{Z} \oplus K(G), $$
where  $K(G)$  is defined to be the \textbf{critical group}. 
\edefinition

For more details, please refer to  \cite{klivans2018mathematics}. 

\bdefinition\label{d1.3}
A \textbf{$k$-partite graph} is one whose vertex set can be partitioned into $k$ subsets, or parts, in such a way that no edge has both ends in the same part. 
\edefinition

In this article, we consider one kind of  $k$-partite graph $G$ with parts of sizes $n_{1} , n_{2}, $
 $ \cdots , n_{k} $. Meanwhile, $G$ is an incomplete graph, in which the vertices in the $i$-th subset are only adjacent to all vertices in the $(i-1)$-th and $(i+1)$-th subsets ($i=2, 3, \cdots ,  k-1$). Specifically, the vertices in the first subset are only adjacent to all vertices in the second subset. Similarly, the vertices in the $k$-th subset are only adjacent to all vertices in the $(k-1)$-th subset. For example, while $k=5, n_{1}=6 , n_{2}=4, n_{3}=5, n_{4}=3, n_{5}=4 $, $G_{n_{1}, n_{2}, \dots , n_{5}}$ is shown in the Figure \ref{fig:1}.

\begin{figure}[ht]
	\centering
	\includegraphics[scale=0.95]{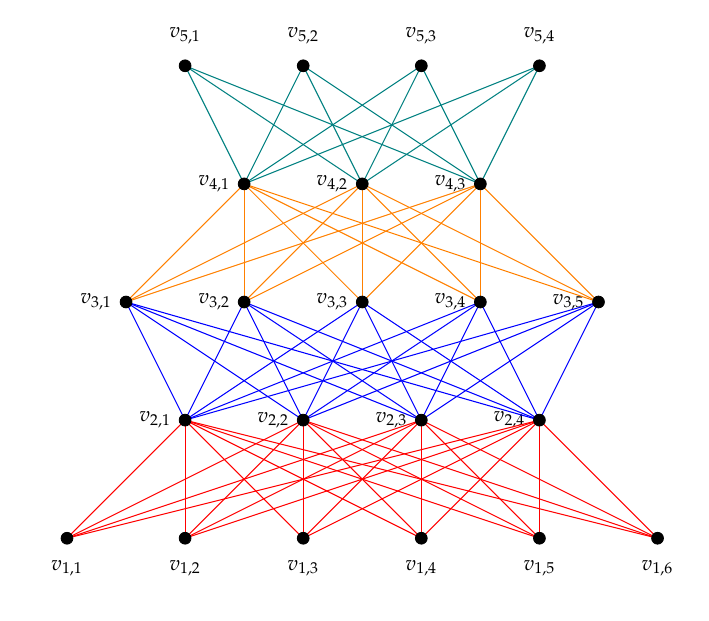}
	\caption{A $5$-partite graph ($n_{1}=6 , n_{2}=4, n_{3}=5, n_{4}=3, n_{5}=4 $). }
	\label{fig:1}
\end{figure}

For the sake of notation, let  $I_{n}$  denote an  $n \times n$  identity matrix, $O$ a zero matrix, and  $J_{m \times n} $ an  $m \times n$  matrix with all entries equal to $1$. Then it is easily seen that by ordering the vertices of  $G_{n_{1}, \ldots, n_{k}} $ in their groups of size  $n_{1}, n_{2}, \ldots, n_{k}$, one has

{\footnotesize  {
\begin{equation}
	L\left(G_{n_{1}, \ldots, {n_{k}}}\right)=\left[\begin{array}{cccccc}
		n_{2} I_{n_{1}} & -J_{n_{1} \times n_{2}} & O& \cdots & O&O \\
		-J_{n_{2} \times n_{1}} & (n_{1}+n_{3})I_{n_{2}} & -J_{n_{2} \times n_{3}}& \cdots & O&O \\
		O&-J_{n_{3} \times n_{2}} &(n_{2}+n_{4})I_{n_{3}} &\cdots & O&O \\
		\vdots &\vdots &\ddots &\ddots&\vdots &\vdots \\
		O&O&\cdots & -J_{n_{k-1} \times n_{k-2}} &(n_{k-2}+n_{k})I_{n_{k-1}}&-J_{n_{k-1} \times n_{k}} \\
		O&O&\cdots &O& -J_{n_{k} \times n_{k-1}} &n_{k-1}I_{n_{k}}		
	\end{array}\right].
	\label{Laplace0}
\end{equation}
}}

In the first stage of reduction, one can perform row and column operations on  $L\left(G_{n_{1}, \ldots, n_{k}}\right)$  to make 
\begin{equation}
	P_{1} 	L\left(G_{n_{1}, \ldots, {n_{k}}}\right)   Q_{1}=\left[\begin{array}{cccccc}
		L_{1,1} & L_{2,2} & O& \cdots & O&O \\
		L_{2,1}  & L_{1,2} & L_{2,3}& \cdots & O&O \\
		O&L_{2,2} &L_{1,3} &\cdots & O&O \\
		\vdots &\vdots &\ddots &\ddots&\vdots &\vdots \\
		O&O&\cdots &L_{2,k-2}  &L_{1,k-1}&L_{2,k} \\
		O&O&\cdots &O& L_{2,k-1} &L_{1,k}		
	\end{array}\right],
	\label{on-dig1}
\end{equation}
where
\begin{equation}
	L_{1,i}=	\left[\begin{array}{cccccc}
		N_{i} & 0 & 0 & \cdots & 0 & 0 \\
		0 & N_{i} & 0 & \cdots & 0 & 0 \\
		0 & 0 & N_{i} & \ddots & \vdots & \vdots \\
		\vdots & \vdots & \ddots & \ddots & 0 & 0 \\
		0 & 0 & \cdots & 0 & N_{i} & 0 \\
		0 & 0 & \cdots & 0 & 0 & N_{i}
	\end{array}\right],
	L_{2,i}=\left[\begin{array}{cccccc}
		-n_{i} & 0 & 0 & \cdots & 0 & -1 \\
		0 & 0 & 0 & \cdots & 0 & 0 \\
		0 & 0 & 0 & \ddots & \vdots & \vdots \\
		\vdots & \vdots & \ddots & \ddots & 0 & 0 \\
		0 & 0 & \cdots & 0 & 0 & 0 \\
		0 & 0 & \cdots & 0 & 0 & 0
	\end{array}\right].
\end{equation}
\begin{equation*}
	N_{i}=\left\{\begin{array}{ll}
		(n_{i-1}+n_{i+1}), & i=2,3,\cdots,k-1 ; \\
		n_{2}, & i=1; \\
		n_{k-1}, & i=k.
	\end{array}\right.
\end{equation*}


The matrices  $P_{1}$  and  $Q_{1}$  are block diagonal  $P_{1}=\operatorname{diag}\left(P_{1,1}, \ldots, P_{1,k}\right)$,\\
 $Q_{1}=\operatorname{diag}\left(Q_{1,1}, \ldots, Q_{1,k}\right)$, where  $P_{1,i}$  and  $Q_{1,i}$  are  $n_{i} \times n_{i}$  matrices given as:

\begin{equation}
	P_{1,i}=\left[\begin{array}{cccccc}
		1 & 0 & 0 & \cdots & 0 & 0 \\
		-1 & 1 & 0 & \cdots & 0 & 0 \\
		0 & -1 & 1 & \ddots & \vdots & \vdots \\
		\vdots & \vdots & \ddots & \ddots & 0 & 0 \\
		0 & 0 & \cdots & -1 & 1 & 0 \\
		-n_{i}+1 & 1 & \cdots & 1 & 1 & 1
	\end{array}\right], \quad  Q_{1,i}=\left[\begin{array}{cccccc}
		1 & 0 & 0 & \cdots & 0 & 0 \\
		1 & 1 & 0 & \cdots & 0 & 0 \\
		1 & 1 & 1 & \cdots & \vdots & \vdots \\
		\vdots & \vdots & \ddots & \ddots & 0 & 0 \\
		1 & 1 & \cdots & 1 & 1 & 0 \\
		1 & -n_{i}+2 & \cdots & -2 & -1 & 1
	\end{array}\right]. 
\end{equation}

According to the above row and column operations on $L(G_{n_{1}, \ldots, n_{k}})$, we can get the following proposition. 

\begin{proposition}
	The critical group of the graph $G_{n_{1}, \ldots, n_{k}}$ has the following isomorphism, 
	\begin{equation}
		\mathbb{Z} \oplus K(G_{n_{1}, \ldots, n_{k}})\cong  \left(\bigoplus_{i=1}^{k} \mathbb{Z} /  (N_{i} \mathbb{Z})^{\oplus(n_{i}-2)} \right) \oplus coker L_{3}, 
	\end{equation}
	where  $L_{3}$  is the  $2k \times 2k$  matrix obtained by removing some rows and columns 
	
	\begin{equation}
		L_{3}=\left[\begin{array}{cccccccc}
			N_{1} & 0 & -n_{2} & -1 & \ldots & \ldots & 0 & 0 \\
			0 & N_{1} & 0 & 0 & \cdots  & \ldots& 0 & 0 \\
			-n_{1} & -1 & N_{2} & 0 & \ddots & \ddots& \vdots& \vdots \\
			0 & 0 & 0 & N_{2} & \ddots&\ddots &\vdots &\vdots \\
			\vdots& \vdots &\ddots & \ddots &\ddots & \ddots & -n_{k} & -1 \\
			\vdots& \vdots & & \ddots &\ddots & \ddots & 0 & 0 \\
			0 & 0 &\cdots &\cdots & -n_{k-1} & -1 & N_{k} & 0 \\
			0 & 0 &\cdots & \cdots & 0 & 0 & 0 & N_{k}
		\end{array}\right].
		\label{L3}
	\end{equation}
\end{proposition}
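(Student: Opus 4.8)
The plan is to compute the cokernel of $L(G_{n_{1},\ldots,n_{k}})$ viewed as a map $\mathbb{Z}^{|V|}\to\mathbb{Z}^{|V|}$ and to read off the stated splitting, using Definition~\ref{d1.2}, which identifies this cokernel with $\mathbb{Z}\oplus K(G_{n_{1},\ldots,n_{k}})$. The first step is to pass from $L(G_{n_{1},\ldots,n_{k}})$ to the reduced matrix $M:=P_{1}L(G_{n_{1},\ldots,n_{k}})Q_{1}$ displayed in \eqref{on-dig1}. Each diagonal block $P_{1,i}$ and $Q_{1,i}$ is a composite of the three unimodular operations listed in the introduction, so $\det P_{1,i}=\det Q_{1,i}=\pm1$ and hence $P_{1},Q_{1}\in GL_{|V|}(\mathbb{Z})$. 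Multiplying a matrix on either side by an element of $GL(\mathbb{Z})$ does not change the isomorphism type of its cokernel, so $\operatorname{coker}L(G_{n_{1},\ldots,n_{k}})\cong\operatorname{coker}M$, and it suffices to analyse $M$.

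The key structural observation is the extreme sparsity of the blocks in \eqref{on-dig1}: every diagonal block $L_{1,i}$ is the scalar matrix $N_{i}I_{n_{i}}$, while every off-diagonal block $L_{2,i}$ has nonzero entries only in its first row, namely $-n_{i}$ in position $(1,1)$ and $-1$ in position $(1,n_{i})$. The plan is to exploit this to decouple the \emph{interior} coordinates. Concretely, for each block $i$ and each index $j$ with $2\le j\le n_{i}-1$, I would check that both row $j$ and column $j$ of $M$ contain only the single nonzero entry $N_{i}$ on the diagonal: the row is clean because the off-diagonal blocks vanish outside their first row, and the column is clean because $L_{2,i}$ is supported only in columns $1$ and $n_{i}$. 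Collecting these interior indices into a set $S$ and the remaining $2k$ \emph{boundary} indices (the first and last coordinate of each block) into a set $T$, the permutation sending $S$ to the front is unimodular and brings $M$ into the block-diagonal form $\operatorname{diag}(M_{S},M_{T})$ with $M_{S}=\bigoplus_{i=1}^{k}N_{i}I_{n_{i}-2}$. Hence $\operatorname{coker}M\cong\operatorname{coker}M_{S}\oplus\operatorname{coker}M_{T}$, and $\operatorname{coker}M_{S}\cong\bigoplus_{i=1}^{k}(\mathbb{Z}/N_{i}\mathbb{Z})^{\oplus(n_{i}-2)}$, which produces the first summand of the claim.

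It then remains to identify the boundary block $M_{T}$ with the matrix $L_{3}$ of \eqref{L3}. Ordering $T$ as (first of block $1$, last of block $1$, first of block $2$, last of block $2$, $\ldots$), the diagonal of $M_{T}$ reads $(N_{1},N_{1},N_{2},N_{2},\ldots,N_{k},N_{k})$. Tracking the coupling entries, the first row of the block $L_{2,i}$ lying in block-column $i$ places $-n_{i}$ into the position joining the first coordinate of the adjacent block to the first coordinate of block $i$, and $-1$ into the position joining it to the last coordinate of block $i$; the ``last'' coordinates themselves never act as the carrying first row, so their rows in $M_{T}$ contain only the diagonal $N_{i}$, while their columns still receive the $-1$ couplings, which is exactly why they survive into $M_{T}$. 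Verifying that each such entry lands in the position and with the sign recorded in \eqref{L3} gives $M_{T}=L_{3}$ up to a unimodular reindexing, whence $\operatorname{coker}M_{T}\cong\operatorname{coker}L_{3}$. Combining the two cokernel computations yields the asserted isomorphism.

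I expect the main obstacle to be precisely this last bookkeeping: confirming that the $2k\times2k$ matrix left after deleting the interior coordinates is \emph{literally} $L_{3}$, with every $-n_{i}$ and $-1$ in the position and sign shown, and that the chosen ordering of $T$ is consistent simultaneously for the interior blocks and for the two end blocks, where $N_{1}=n_{2}$ and $N_{k}=n_{k-1}$. A secondary point to address is that the exponents $n_{i}-2$ and the distinction between the first and last coordinate of a block presuppose $n_{i}\ge2$; the degenerate sizes $n_{i}=1$ (a single coordinate carrying only the diagonal $N_{i}$) should be handled as a separate, easy case.
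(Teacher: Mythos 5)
Your proposal follows essentially the same route as the paper's own proof: the paper likewise conjugates by the unimodular $P_{1},Q_{1}$ to reach the matrix $\overline{L}$ of \eqref{on-dig1}, observes that the rows and columns through the second to $(n_{i}-1)$-th diagonal entries $N_{i}$ of each block are otherwise zero, splits off the resulting $n_{i}-2$ factors $\mathbb{Z}/N_{i}\mathbb{Z}$, and identifies the surviving $2k$ boundary rows and columns (the first and last of each block) with $L_{3}$. Your additional bookkeeping---verifying unimodularity of $P_{1,i},Q_{1,i}$, tracking that the $-n_{i}$ and $-1$ couplings land as in \eqref{L3}, and flagging the degenerate cases $n_{i}\le 2$---is just a more careful rendering of the same argument, which the paper states tersely.
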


		\begin{proof}
			After the operation $P_{1} 	L\left(G_{n_{1}, \ldots, {n_{k}}}\right)   Q_{1}$, the resulting matrix is as follows: 
			\begin{equation}
				\overline{L} =\left[\begin{array}{cccccccccccccc}
					N_{1} &0 &\ldots & 0 & -n_{2} & 0& \ldots&-1 & \ldots & \ldots & 0 &0&\ldots& 0 \\
					0 & N_{1} &\ldots& 0 & 0 &0&\cdots&0& \cdots  & \ldots& 0 & 0 &\ldots& 0\\
					\vdots& \vdots&\ddots&\vdots&\vdots& \vdots&\ddots&\vdots&\cdots& \cdots&\vdots&\vdots&\ddots&\vdots\\
					0 & 0 &\ldots& N_{1} & 0 &0&\cdots&0& \cdots  & \ldots& 0 & 0 &\ldots& 0\\
					-n_{1} &0&\cdots& -1 & N_{2} & 0 & \cdots&0 & \cdots& \cdots&0&0& \cdots &0\\
					0 & 0 &\cdots& 0&0 & N_{2} & \cdots&0&\cdots &\cdots &0&0&\cdots &0\\
					\vdots& \vdots&\ddots&\vdots&\vdots& \vdots&\ddots&\vdots&\cdots& \cdots&\vdots&\vdots&\ddots&\vdots\\
					0 & 0 &\ldots& 0 & 0 &0&\cdots&N_{2}& \cdots  & \ldots& 0 & 0 &\ldots& 0\\
					0 &0 &\ldots & 0 & -n_{2} & 0& \ldots&-1 & \ldots & \ldots & 0 &0&\ldots& 0 \\
					0 & 0 &\ldots& 0 & 0 &0&\cdots&0& \cdots  & \ldots& 0 & 0 &\ldots& 0\\
					\vdots& \vdots&\ddots&\vdots&\vdots& \vdots&\ddots&\vdots&\cdots& \cdots&\vdots&\vdots&\ddots&\vdots\\
					0 & 0 &\ldots& 0 & 0 &0&\cdots&0& \cdots  & \ldots& 0 & 0 &\ldots& 0\\
					\vdots& \vdots& &\vdots&\vdots&\vdots&&\vdots&&&\vdots&\vdots&&\vdots\\
					\vdots& \vdots& &\vdots&\vdots&\vdots&&\vdots&&&\vdots&\vdots&&\vdots\\			
					0 &0 &\ldots & 0 & 0 & 0& \ldots&0 & \ldots & \ldots & N_{k} &0&\ldots& 0 \\
					0 & 0 &\ldots& 0 & 0 &0&\cdots&0& \cdots  & \ldots& 0 & N_{k} &\ldots& 0\\
					\vdots& \vdots&\ddots&\vdots&\vdots& \vdots&\ddots&\vdots&\cdots& \cdots&\vdots&\vdots&\ddots&\vdots\\
					0 & 0 &\ldots& 0 & 0 &0&\cdots&0& \cdots  & \ldots& 0 & 0 &\ldots& N_{k}\\
				\end{array}\right].
				\label{L_bar}
			\end{equation}
			
			Consider the rows and columns of the integers $N_{1}$ in the matrix, we can find that the entries are only zeros in the same rows and columns as from the second to $(n_{1}-1)$-th entry $N_{1}$. The situations are same for the integer from $N_{2}$ to $N_{k}$. Hence, we can  obtain $n_{i}-2$ invariant factors $N_{i}$ and the  $2k \times 2k$  matrix $L_{3}$ by removing these rows and columns.
			$\hfill\blacksquare$ 
			
		\end{proof}

By calculation, we can obtain $L_{4}=P_{2}L_{3}Q_{2}$, where $P_{2}, Q_{2} \in G L_{2 k}(\mathbb{Z})$   are as follows,  

\begin{equation}
	\begin{array}{l} 
		P_{2}=\left[\begin{array}{cccccccc}
			B & A & O & O & O &  \cdots & \cdots&O \\
			n_{1}B-D & n_{2}B & A & O & O &  \cdots & \cdots&O \\
			n_{1}B-D &	n_{2}B-D & n_{3}B & A & O   & \cdots & \cdots&O \\
			\vdots & \vdots & \ddots & \ddots & \ddots &\ddots & & \vdots  \\
			\vdots & \vdots & & \ddots & \ddots & \ddots &\ddots &  \vdots  \\
			\vdots & \vdots & & & \ddots & \ddots & \ddots & O \\
			n_{1}B-D & n_{2}B-D & \cdots & \cdots & \cdots  & n_{k-2}B-D &n_{k-1}B& A \\
			n_{1}R+S & n_{2}R+S & \cdots & \cdots & \cdots  & n_{k-2}R+S & n_{k-1}R+S &n_{k}R+T
		\end{array}\right],
	\end{array}
	\label{P2}
\end{equation}

	\begin{equation}
		\begin{array}{l} 
			Q_{2}=	\left[\begin{array}{cccccc}
				-n_{1}A+I_{2} & O & O & \cdots & O & O \\
				O & -n_{2}A+I_{2} & O & \cdots & O & O \\
				O & O & -n_{3}A+I_{2} & \ddots & \vdots & \vdots \\
				\vdots & \vdots & \ddots & \ddots & O & O \\
				O & O & \cdots & O & -n_{k-1}A+I_{2} & O \\
				O & O & \cdots & O & O & -n_{k}A+I_{2}
			\end{array}\right], 
		\end{array}
		\label{Q2}
	\end{equation}
where

$$ A= \left[\begin{array}{cc}
	0 & 0 \\
	1 & 0
\end{array}\right], \quad B= \left[\begin{array}{cc}
	1 & 0 \\
	0 & 0
\end{array}\right], \quad C= \left[\begin{array}{cc}
	1 & 0 \\
	0 & -1
\end{array}\right],\quad  D= \left[\begin{array}{cc}
	0 & -1 \\
	0 & 0
\end{array}\right], $$
~\\
$$ R= \left[\begin{array}{cc}
	1 & 0 \\
	1 & 0
\end{array}\right], \quad S= \left[\begin{array}{cc}
	0 & 1 \\
	0 & 1
\end{array}\right], \quad T= \left[\begin{array}{cc}
	0 & 0 \\
	0 & 1
\end{array}\right]. $$

Further reduction of  $L_{3}$  can be achieved by re-ordering rows and columns to obtain
{\tiny
	\begin{equation}
		L_{4}=\left[\begin{array}{cccccccccccccccc}
			-n_{2}B-T &  N_{2}A+D &  C & O & O & O    & \cdots  & \cdots & O  \\
			O & n_{2}N_{2}B+n_{1}D-T & N_{3}A+n_{2}D & C & O & O & \cdots  & \cdots & O\\
			
			O& O & n_{3}N_{3}B+n_{2}D-T & N_{4}A+n_{3}D &  C & O  &\cdots  & \cdots & O \\
			
			\vdots & \vdots & \ddots & \ddots & \ddots  & \ddots&  &  \vdots &\vdots\\
			
			\vdots & \vdots & & \ddots  & \ddots & \ddots  & \ddots &   \vdots &\vdots\\
			
			\vdots & \vdots & & &\ddots  & \ddots & \ddots  & \ddots &   \vdots \\
			
			O&O& \cdots&\cdots&   \cdots&O  &  n_{k-2}N_{k-2}B+n_{k-3}D-T &N_{k-1}A+n_{k-2}D&C\\
			
			O&O& \cdots&\cdots&  \cdots& \cdots&O  & n_{k-1}N_{k-1}B+n_{k-2}D-T&N_{k}A+n_{k-1}D\\
			
			O&O& \cdots&\cdots&  \cdots& \cdots&O& O & n_{k}N_{k}B+n_{k-1}D \\
		\end{array}\right] .
		\label{L4}
	\end{equation}	
}

Now, $ L_{4} $ is an upper triangular matrix and  upper $5$-banded matrix, where the entries in the $i$-th row and $j$-th column are zeros for $j<i$ and $j\ge i+5 $. According to the algorithms in the paper \cite{jager2004new}, we can reduce $ L_{4} $ to an upper $2$-banded matrix. And then we obtain its Smith normal form  by the algorithms in \cite{storjohann1996near}. 

\begin{proposition}
	By the above steps, for $k\ge 4$, the critical groups can be decomposed as 
	
	\begin{equation}
		\begin{aligned}
			\mathbb{Z} \oplus K(G_{n_{1}, \ldots, n_{k}})\cong & \left( \bigoplus_{i=1}^{k}  \mathbb{Z} /  (N_{i} \mathbb{Z})^{\oplus(n_{i}-2)}\right)   \oplus \mathbb{Z} /  (n_{2}(n_{1}+n_{3}) \mathbb{Z}) \\
			& \oplus \mathbb{Z} /  (n_{k-2}(n_{k-1}+n_{k}) \mathbb{Z}) 
			\oplus \mathcal{G},\\ 
		\end{aligned}
	\end{equation}
	where $\mathcal{G} $ is a finite  Abelian group determined by the numbers $n_{1}, n_{2}, \cdots,  n_{k}$. And we can achieve the determinant of the graph Laplacian $L(G_{n_{1}, \ldots, {n_{k}}})$  is 
	\begin{equation}
		det(L(G_{n_{1}, \ldots, {n_{k}}}))= \left(\prod_{i=1}^{k} N_{i}^{n_{i}-1}  \right) \cdot \left(\prod_{i=2}^{k-1} n_{i} \right) 
	\end{equation} 
	which is the number of spanning trees in the graph $G_{n_{1}, \ldots, {n_{k}}}$. 
\end{proposition}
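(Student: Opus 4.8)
The plan is to reduce everything to the single matrix $L_4$ and then to treat its structure and its order by two independent arguments. Since $P_2,Q_2\in GL_{2k}(\mathbb Z)$ are invertible over $\mathbb Z$, the identity $L_4=P_2L_3Q_2$ gives $\operatorname{coker}L_4\cong\operatorname{coker}L_3$, and combining this with the previous proposition yields
\[
\mathbb Z\oplus K(G_{n_1,\ldots,n_k})\cong\Big(\bigoplus_{i=1}^{k}(\mathbb Z/N_i\mathbb Z)^{\oplus(n_i-2)}\Big)\oplus\operatorname{coker}L_4 .
\]
Thus the entire statement follows once we understand the Smith normal form of the $2k\times 2k$ matrix $L_4$ in \eqref{L4}. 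I would split the work into two parts: (i) extracting the cyclic summands of $\operatorname{coker}L_4$, and (ii) computing the order $\kappa(G_{n_1,\ldots,n_k})=|K(G_{n_1,\ldots,n_k})|$, which is the number of spanning trees.

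For (i) I would follow the route indicated right after \eqref{L4}. Because $L_4$ is upper triangular and upper $5$-banded, the algorithm of \cite{jager2004new} turns it, through unimodular row and column operations that do not change the cokernel, into an upper $2$-banded matrix, to which the Smith-form procedure of \cite{storjohann1996near} applies. The two ends of the band give the cleanest factors: the top-left blocks split off the invariant factor $n_2(n_1+n_3)=n_2N_2$, the bottom-right blocks split off $n_{k-2}(n_{k-1}+n_k)$, and the single zero on the diagonal of the last block of \eqref{L4} supplies the free $\mathbb Z$ summand cancelling the $\mathbb Z$ on the left. All contributions from the interior blocks are gathered into the finite abelian group $\mathcal G$. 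This is where I expect the only real difficulty: for general $k$ the interior invariant factors are gcd/continuant-type expressions in $n_1,\ldots,n_k$ with no uniform closed form, which is precisely why $\mathcal G$ can be named only as ``a finite abelian group determined by $n_1,\ldots,n_k$'' rather than written explicitly.

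For (ii) I would bypass the Smith form and compute the number of spanning trees straight from \eqref{Laplace0} using the equitable partition of $G_{n_1,\ldots,n_k}$ into its $k$ parts. Writing $\mathbb R^{n}=W_0\oplus W_1$, where $W_0$ is spanned by the indicator vectors of the parts and $W_1$ is the set of vectors that are zero-sum on every part, both subspaces are $L$-invariant. On $W_1$ each block $J_{n_i\times n_j}$ kills the relevant coordinates, so $L$ acts as $N_i\,\mathrm{Id}$ on the zero-sum subspace of part $i$ and contributes the eigenvalue $N_i$ with multiplicity $n_i-1$. On $W_0$, in the indicator basis, $L$ is represented by the $k\times k$ quotient matrix
\[
\widetilde L=\begin{pmatrix}
N_1 & -n_2 & & & \\
-n_1 & N_2 & -n_3 & & \\
 & -n_2 & N_3 & \ddots & \\
 & & \ddots & \ddots & -n_k \\
 & & & -n_{k-1} & N_k
\end{pmatrix},
\]
which has zero row sums and hence a simple eigenvalue $0$. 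By the eigenvalue form of Kirchhoff's theorem recalled in the introduction,
\[
\kappa(G_{n_1,\ldots,n_k})=\frac1n\Big(\prod_{i=1}^{k}N_i^{\,n_i-1}\Big)\!\!\prod_{\substack{\mu\in\mathrm{spec}(\widetilde L)\\ \mu\neq0}}\!\!\mu .
\]
The product of the nonzero eigenvalues of $\widetilde L$ equals the sum of its principal $(k-1)\times(k-1)$ minors; viewing $\widetilde L$ as the out-Laplacian of a weighted path in which the arc $i\to j$ carries weight $n_j$, the directed Matrix-Tree theorem evaluates the $i$-th such minor as the weight of the unique in-arborescence rooted at $i$, namely $n_i\prod_{l=2}^{k-1}n_l$. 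Summing over $i$ gives $\big(\sum_i n_i\big)\prod_{l=2}^{k-1}n_l=n\prod_{l=2}^{k-1}n_l$, so the factor $1/n$ cancels and $\kappa(G_{n_1,\ldots,n_k})=\prod_{i=1}^{k}N_i^{\,n_i-1}\prod_{i=2}^{k-1}n_i$, as claimed. This same value pins down $|\mathcal G|$ and checks the consistency of part (i).
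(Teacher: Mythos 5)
Your overall architecture matches the paper's for the group-decomposition half: the paper likewise passes from $L_{3}$ to $L_{4}=P_{2}L_{3}Q_{2}$ with $P_{2},Q_{2}\in GL_{2k}(\mathbb{Z})$, invokes the banded-matrix algorithms of the two cited references, and states the proposition without ever extracting the interior factors explicitly --- so your part (i) reproduces the paper's sketch at the same level of rigor (the claim that the two ends of the band ``split off'' clean cyclic summands is asserted, not derived, in both accounts; note also these are cyclic summands rather than invariant factors in the Smith sense, since divisibility need not hold). Where you genuinely depart from the paper is part (ii): the paper obtains the tree count as a by-product of the same elimination that produces the Smith form, whereas you compute it independently via the equitable partition into the $k$ parts, the eigenvalue form of Kirchhoff's theorem from the introduction, and the directed matrix-tree theorem applied to the $k\times k$ quotient matrix of a weighted path. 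That computation is correct --- the eigenvalue $N_{i}$ with multiplicity $n_{i}-1$ on the part-wise zero-sum space, the principal-minor evaluation $n_{r}\prod_{l=2}^{k-1}n_{l}$, and the cancellation of $1/n$ all check out --- and it buys two things: a proof of the counting formula uniform in $k$ and independent of the unverified banded reduction, and a silent repair of the paper's slip that literally $\det L(G)=0$ for any graph Laplacian (what you compute is $\kappa(G)$, which is what the displayed formula actually is; it is also consistent with the $k=2,3$ checks).

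There is, however, one concrete error, inherited from the statement but contradicted by your own argument: the summand $\mathbb{Z}/(n_{k-2}(n_{k-1}+n_{k})\mathbb{Z})$ is not what the bottom of the band in equation~(\ref{L4}) produces. The last nontrivial diagonal entries of $L_{4}$ are $n_{k-1}N_{k-1}=n_{k-1}(n_{k-2}+n_{k})$ (from the block $n_{k-1}N_{k-1}B+n_{k-2}D-T$) and $n_{k}N_{k}$; the correct symmetric partner of $n_{2}N_{2}$ is therefore $n_{k-1}N_{k-1}$, and indeed every explicit result in the paper ($k=4,5,6$) contains $n_{k-1}(n_{k-2}+n_{k})$, not $n_{k-2}(n_{k-1}+n_{k})$. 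The version you assert is false in general: for $k=4$ with $(n_{1},n_{2},n_{3},n_{4})=(2,2,3,5)$ the torsion group has order $7\cdot 3^{3}\cdot 10\cdot 21\cdot 6=2^{2}\cdot 3^{5}\cdot 5\cdot 7^{2}$, whose $2$-adic valuation is $2$, so $\mathbb{Z}/(n_{2}(n_{3}+n_{4})\mathbb{Z})=\mathbb{Z}/16\mathbb{Z}$ cannot be a direct summand. Since your part (ii) independently pins down the group order, you had the tools to catch this inconsistency between the factor you name and the matrix you point to; replacing $n_{k-2}(n_{k-1}+n_{k})$ by $n_{k-1}(n_{k-2}+n_{k})$ (evidently a typo in the proposition itself) makes your part (i) agree with equation~(\ref{L4}), with the worked theorems in Sections 3--5, and with your own order computation.
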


\begin{example}
	For the $2$-partite graph  $ G_{n_{1},n_{2}} $, we can get the followings from  Equation~(\ref{Laplace0}) to  Equation~(\ref{L3}),
	\begin{equation}
		\mathbb{Z} \oplus K(G_{n_{1}, n_{2}})\cong  \mathbb{Z} /  (n_{1} \mathbb{Z})^{\oplus(n_{2}-2)} \oplus \mathbb{Z} /  (n_{2} \mathbb{Z})^{\oplus(n_{1}-2)} \oplus coker L_{3}(G_{n_{1}, n_{2}}),
	\end{equation}
	\begin{equation}
		L_{3}(G_{n_{1},n_{2}}) =\left[\begin{array}{cccccccc}
			n_{2} & 0 & -n_{2} & -1 \\
			0 & n_{2} & 0 & 0 \\
			-n_{1} & -1 & n_{1} & 0 \\
			0 & 0 & 0 & n_{1} 			
		\end{array}\right].
	\end{equation}
	By some row and column operations, we get $L_{4}(G_{n_{1},n_{2}}) = P_{3}(G_{n_{1},n_{2}})L_{3}(G_{n_{1},n_{2}})Q_{3}(G_{n_{1},n_{2}}) $, 
	where 
		\begin{equation*}
	\begin{array}{l} 
		P_{3}(G_{n_{1}, n_{2}})=\left[\begin{array}{cccc}
			1     & 0 & 0 & 0 \\
			0     &1  & 0 & 0 \\
			-n_{1} &0  & 1 & 0 \\
			0     & 0 & 0 & 1 
		\end{array}\right]
	\end{array}, 
	\begin{array}{l} 
		Q_{3}(G_{n_{1},  n_{2}}) =\left[\begin{array}{cccc}
			0     & 0 & 0 & 1 \\
			0 &1 & n_{1}    & n_{1}  \\
			0     & 0 & 1 & 1 \\
			1 & 0 & 0 & n_{2} \\
		\end{array}\right], 
	\end{array}
\end{equation*}
\begin{equation}
	\begin{array}{l} 
		L_{4}(G_{n_{1}, n_{2}})=\left[\begin{array}{cccc}
			-1     & 0 & 0 & 0 \\
			0 &-1  & 0 & 0 \\
			0 &0  & n_{1}n_{2} & 0 \\
			0     & 0 & 0 & 0
		\end{array}\right]
	\end{array}. 
\end{equation}
	
	Then we find
	\begin{equation}
		\mathbb{Z} \oplus K(G_{n_{1}, n_{2}})\cong  \mathbb{Z} /  (n_{1} \mathbb{Z})^{\oplus(n_{2}-2)} \oplus \mathbb{Z} /  (n_{2} \mathbb{Z})^{\oplus(n_{1}-2)} \oplus \mathbb{Z} /  (n_{1}n_{2} \mathbb{Z}).
	\end{equation}

\end{example}

In this case, the result is identical to the one in \cite{lorenzini1991finite}. It is straightforward to work out the critical group structures of complete bipartite graphs with our method.

\begin{remark}
	For $k=3$, $ G_{n_{1},n_{2},n_{3}} $ is also a  complete bipartite graph. In other words, consider the $n_{1}+n_{3}$ vertices  of the first and third parts as one part of the bipartite graph, and the remaining $n_{2}$ vertices as the other part.
\end{remark}

\section{The critical group of the $4$-partite graph}\label{Critical group of 4-partite graph}

In this section, we can obtain the critical group of the $4$-partite graph $G_{n_{1}, \ldots, n_{4}}$. Following the above calculation steps from Equation~(\ref{Laplace0}) to Equation~(\ref{L4}), we can achieve

\begin{equation}
	\mathbb{Z} \oplus K(G_{n_{1}, \ldots, n_{4}})\cong \left(\bigoplus_{i=1}^{4} \mathbb{Z} /  (N_{i} \mathbb{Z})^{\oplus(n_{i}-2)} \right)  \oplus coker L_{4}(G_{n_{1}, \ldots, n_{4}}),
\end{equation}
where 	\begin{equation}
	L_{4}(G_{n_{1}, \ldots, n_{4}}):=\left[\begin{array}{cccccccc}
		n_{2} & 0 & 0 & -1 & 0 & 0 & 0 & 0\\
		0 & -1 & n_{1}+n_{3} & 0 & 0 & -1 & 0 & 0\\ 
		0 & 0 & n_{2}(n_{1}+n_{3}) & -n_{1} & 0 & n_{2} & 0 & 0\\
		0 & 0 & 0 & -1 & n_{2}+n_{4} & 0 & 0 & -1\\
		0 & 0 & 0 & 0 & n_{3}(n_{2}+n_{4}) & -n_{2} & 0 & -n_{3}\\
		0 & 0 & 0 & 0 & 0 & -1 & n_{3} & 0\\
		0 & 0 & 0 & 0 & 0 &  0 & n_{3}n_{4} & -n_{3}\\
		0 & 0 & 0 & 0 & 0 &  0 & 0 & 0\\
	\end{array}\right] .
	\label{L''4}
\end{equation}

By calculation, we can obtain $L_{5}(G_{n_{1}, \ldots, n_{4}})=P_{3}(G_{n_{1}, \ldots, n_{4}})L_{4}(G_{n_{1}, \ldots, n_{4}})Q_{3}(G_{n_{1}, \ldots, n_{4}})$, where $P_{3}(G_{n_{1}, \ldots, n_{4}}), Q_{3}(G_{n_{1}, \ldots, n_{4}}) \in G L_{8}(\mathbb{Z})$  are

\begin{equation}
	\begin{array}{l} 
		P_{3}(G_{n_{1}, \ldots, n_{4}})=\left[\begin{array}{cccccccc}
			1 & 0 & 0 & -1 & 0 & 0 & 0 & 0 \\
			0 & 1 & 0 & 0  & 0 & 0 & 0 & 0 \\
			-n_{1}-n_{3} & 0 & 1 & n_{3} & -1 & 0 & 0 & 0 \\
			0 & 0 & 0 & 1 & 0 & 0 & 0 & 0 \\
			0 & 0 & 0 & 0 & 1 & -n_{2} & -1 & 0 \\
			0 & 0 & 0 & 0 & 0 & 1 & 0 & 0 \\
			n_{3} & 0 & 0 & -n_{3} & 1 & -n_{2} & 0 & 0 \\
			0 & 0 & 0 & 0 & 0 & 0 & 0 & 1 \\
			
		\end{array}\right]
	\end{array}, 
\end{equation}
\begin{equation}
	\begin{array}{l} 
		Q_{3}(G_{n_{1}, \ldots, n_{4}}) =\left[\begin{array}{cccccccc}
			0 & 0 & 1 & 0 & 0 & 0 & 0 & 1 \\
			0 & 1 & -n_{3} & 0 & 0 & -1 & -n_{3} & n_{1} \\
			0 & 0 & 0 & 0 & 0 & 0 & 0 & 1 \\
			-1 & 0 & n_{2} & 1 & 0 & 0 & 0 & n_{2} \\
			0 & 0 & 1 & 0 & 1 & 0 & 1 & 1 \\
			0 & 0 & n_{3} & 0 & 0 & 1 & n_{3} & n_{3} \\
			0 & 0 & 1 & 0 & 0 & 0 & 1 & 1 \\
			1 & 0 & n_{4} & 0 & n_{2}+n_{4} & 0 & n_{2}+n_{4} & n_{4} \\				
		\end{array}\right], 
	\end{array}
\end{equation}

and

\begin{equation}
	L_{5}(G_{n_{1}, \ldots, n_{4}}):=\left[\begin{array}{cccccccc}
		1 & 0 & 0 & 0 & 0 & 0 & 0 &  0\\
		0 & -1 & 0 & 0 & 0 & 0 & 0 & 0 \\
		0 & 0 & n_{2}(n_{1}+n_{3}) & 0 & 0 & 0 & 0 & 0 \\
		0 & 0 & 0 & -1 & 0 & 0 & 0 & 0 \\
		0 & 0 & 0 & 0 & n_{3}(n_{2}+n_{4}) & 0 & 0 & 0 \\
		0 & 0 & 0 & 0 & 0 & -1 & 0 & 0 \\
		0 & 0 & 0 & 0 & 0 & 0 & -n_{2}n_{3} & 0 \\
		0 & 0 & 0 & 0 & 0 & 0 & 0 & 0 \\
	\end{array}\right] .
\end{equation}	

Hence, we can obtain the following therom. 

\begin{theorem}
	The critical group of $G_{n_{1},n_{2},n_{3},n_{4}}$ has the following structure
	\begin{equation}
		\begin{split}
			\mathbb{Z} \oplus K(G_{n_{1},n_{2},n_{3},n_{4}})\cong &
			\mathbb{Z} /  (n_{2} \mathbb{Z})^{\oplus(n_{1}-2)} \oplus \mathbb{Z} /  ((n_{1}+n_{3}) \mathbb{Z})^{\oplus(n_{2}-2)}\\
			 & \oplus \mathbb{Z} /  ((n_{2}+n_{4}) \mathbb{Z})^{\oplus(n_{3}-2)}
			 \oplus \mathbb{Z} /  (n_{3} \mathbb{Z})^{\oplus(n_{4}-2)} \\
			 &\oplus \mathbb{Z} /  (n_{2}n_{3}) \mathbb{Z}) \oplus \mathbb{Z} /  (n_{2}(n_{1}+n_{3}) \mathbb{Z}) \oplus \mathbb{Z} /  (n_{3}(n_{2}+n_{4}) \mathbb{Z}).
		\end{split}
	\end{equation}
\end{theorem}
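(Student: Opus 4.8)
The plan is to invoke the $k=4$ specialization of the decomposition displayed at the beginning of this section, namely $\mathbb{Z}\oplus K(G_{n_1,\dots,n_4})\cong\left(\bigoplus_{i=1}^{4}\mathbb{Z}/(N_i\mathbb{Z})^{\oplus(n_i-2)}\right)\oplus\operatorname{coker}L_4(G_{n_1,\dots,n_4})$, and then to compute the single remaining cokernel. Here $N_1=n_2$, $N_2=n_1+n_3$, $N_3=n_2+n_4$, and $N_4=n_3$, so the four $N_i$-blocks already supply the repeated factors $\mathbb{Z}/(n_2\mathbb{Z})$, $\mathbb{Z}/((n_1+n_3)\mathbb{Z})$, $\mathbb{Z}/((n_2+n_4)\mathbb{Z})$, $\mathbb{Z}/(n_3\mathbb{Z})$ with the stated multiplicities $n_1-2$, $n_2-2$, $n_3-2$, $n_4-2$. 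All remaining content of the theorem is therefore concentrated in the cokernel of the explicit $8\times 8$ matrix $L_4(G_{n_1,\dots,n_4})$ of Equation~(\ref{L''4}).

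First I would work entirely over $\mathbb{Z}$ and diagonalize $L_4(G_{n_1,\dots,n_4})$ by unimodular row and column operations. Since left (resp. right) multiplication by a matrix of $GL_8(\mathbb{Z})$ realizes integer row (resp. column) operations and leaves the cokernel unchanged, it suffices to exhibit $P_3,Q_3\in GL_8(\mathbb{Z})$ with $P_3\,L_4(G_{n_1,\dots,n_4})\,Q_3$ diagonal. The displayed matrices $P_3$ and $Q_3$ are the proposed witnesses, and two facts must be checked: that $\det P_3=\pm1$ and $\det Q_3=\pm1$, and that the triple product equals $L_5=\operatorname{diag}\bigl(1,\,-1,\,n_2(n_1+n_3),\,-1,\,n_3(n_2+n_4),\,-1,\,-n_2n_3,\,0\bigr)$. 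The unimodularity can be verified by cofactor expansion exploiting the many unit rows and columns of $P_3$ and $Q_3$; the product identity is the substantive step.

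Granting $P_3\,L_4(G_{n_1,\dots,n_4})\,Q_3=L_5$, the cokernel follows immediately from the diagonal form: each $\pm1$ pivot contributes a trivial group, the zero pivot contributes a free summand $\mathbb{Z}$, and each remaining pivot $d$ contributes $\mathbb{Z}/(d\mathbb{Z})$ (using $\mathbb{Z}/((-d)\mathbb{Z})\cong\mathbb{Z}/(d\mathbb{Z})$). Hence $\operatorname{coker}L_4(G_{n_1,\dots,n_4})\cong\mathbb{Z}\oplus\mathbb{Z}/(n_2n_3\mathbb{Z})\oplus\mathbb{Z}/(n_2(n_1+n_3)\mathbb{Z})\oplus\mathbb{Z}/(n_3(n_2+n_4)\mathbb{Z})$. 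Substituting this together with the values of the $N_i$ into the decomposition, the free summand $\mathbb{Z}$ is exactly the rank-one free part of $\operatorname{coker}L(G_{n_1,\dots,n_4})\cong\mathbb{Z}\oplus K$, and collecting the torsion factors yields precisely the displayed direct sum.

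The main obstacle is the verification $P_3\,L_4(G_{n_1,\dots,n_4})\,Q_3=L_5$: confirming that these particular unimodular matrices clear every off-diagonal entry of $L_4(G_{n_1,\dots,n_4})$ and leave exactly the three non-unit pivots $n_2(n_1+n_3)$, $n_3(n_2+n_4)$, and $n_2n_3$. This is a direct but genuinely error-prone $8\times 8$ product whose entries are bilinear in the $n_i$, and it is precisely where the shape of $L_5$—and with it the three product-type invariant factors—is pinned down. I would organize the computation by tracking how the unit pivots arising from the $-1$ entries of $L_4(G_{n_1,\dots,n_4})$ are used to eliminate the couplings between consecutive parts before the three product factors emerge on the diagonal.
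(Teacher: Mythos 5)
Your proposal follows exactly the paper's own route: it specializes the general decomposition to $k=4$, uses the very same unimodular matrices $P_{3}(G_{n_{1},\ldots,n_{4}})$ and $Q_{3}(G_{n_{1},\ldots,n_{4}})$ to diagonalize $L_{4}(G_{n_{1},\ldots,n_{4}})$ into $L_{5}=\operatorname{diag}\bigl(1,-1,n_{2}(n_{1}+n_{3}),-1,n_{3}(n_{2}+n_{4}),-1,-n_{2}n_{3},0\bigr)$, and reads off the cokernel. Your added remarks on verifying unimodularity and the matrix product are sound but constitute the same computation the paper performs implicitly, so there is nothing methodologically different to compare.
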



\section{The critical group of  the $5$-partite graph}\label{Critical group of 5-partite graph}

In this section, we continue to calculate the critical group of the $5$-partite graph with the same method as before. Then we can get 

\begin{equation}
	\mathbb{Z} \oplus K(G_{n_{1}, \ldots, n_{5}})\cong  \left(\bigoplus_{i=1}^{5} \mathbb{Z} /  (N_{i} \mathbb{Z})^{\oplus(n_{i}-2)} \right) \oplus coker L_{4}(G_{n_{1}, \ldots, n_{5}}),
\end{equation}
where 	{\footnotesize 
\begin{equation}
	L_{4}(G_{n_{1}, \ldots, n_{5}}):=\left[\begin{array}{cccccccccc}
		n_{2} & 0&0         &-1 &0 &0 &0&0&0&0\\
		0     &-1&n_{1}+n_{3}&0  &0 &-1&0&0&0&0\\
		0     &0 &n_{2}(n_{1}+n_{3})&-n_{1}&0&-n_{2}&0&0&0&0\\
		0&0&0&-1&n_{2}+n_{4}&0&0&-1&0&0\\
		0&0&0&0&n_{3}(n_{2}+n_{4})&-n_{2}&0&-n_{3}&0&0\\
		0&0&0&0&0&-1&n_{4}&0&0&-1\\
		0&0&0&0&0&0&n_{4}(n_{3}+n_{5})&-n_{3}&0&-n_{4}\\
		0&0&0&0&0&0&0&-1&n_{4}&0\\
		0&0&0&0&0&0&0&0&n_{4}n_{5}&-n_{4}\\
		0&0&0&0&0&0&0&0&0&0\\
	\end{array}\right] .
	\label{L''4}
\end{equation}
}

By calculation ,we can obtain $L_{5}(G_{n_{1}, \ldots, n_{5}})=P_{3}(G_{n_{1}, \ldots, n_{5}})L_{4}(G_{n_{1}, \ldots, n_{5}})Q_{3}(G_{n_{1}, \ldots, n_{5}})$, where $P_{3}(G_{n_{1}, \ldots, n_{5}}), Q_{3}(G_{n_{1}, \ldots, n_{5}}) \in G L_{10}(\mathbb{Z})$  are

\begin{equation}
	\begin{array}{l} 
		P_{3}(G_{n_{1}, \ldots, n_{5}})=\left[\begin{array}{cccccccccc}
			1     & 0 & 0 & -1 &0 & 0 & 0 & 1 & 0 &0\\
			0     & 1 & 0 & 0 &0 & 0 & 0 & 1 & 0 &0\\
			-n_{1}-n_{3} & 0 &1  & n_{3} & -1 & 0& 0&0&0&0\\
			0     & 0 & 0 & 1 &0 & 0 & 0 & 1 & 0 &0\\
			n_{3} & 0 &0  & -n_{3} & 1 & -n_{2}& 0&0&0&0\\
			0     & 0 & 0 & 0 &0 & 1 & 0 & 0 & 0 &0\\
			0     & 0 & 0 & 0 &0 & 0 & 1 & -n_{3} & -1 &0\\
			0     & 0 & 0 & 0 &0 & 0 & 0 & 1 & 0 &0\\
			0     & 0 & 0 & 0 &0 & 0 & 1 & -n_{3} & 0 &0\\
			0     & 0 & 0 & 0 &0 & 0 & 0 & 0 & 0 &1\\
		\end{array}\right]
	\end{array}, 
\end{equation}
\begin{equation}
	\begin{array}{l} 
		Q_{3}(G_{n_{1}, \ldots, n_{5}})=\left[\begin{array}{cccccccccc}
			0     & 0 & 0 & 0 & 1 & 0 &0 &0 &0&1\\
			0     & 1 & n_{1}+n_{3} & 0 & n_{1}+n_{3} & -1 &0 &0 &1&n_{1}\\
			0     & 0 & 1 & 0 & 1 & 0 &0 &0 &0&1\\
			n_{2}+n_{4}     & 0 & 0 & 1 & 0 & 0 &0 &-1 &0&n_{2}\\
			1     & 0 & 0 & 0 & 0 & 0 &0 &0 &0&1\\
			0     & 0 & 0 & 0 & 0 & 1 &0 &0 &-1&n_{3}\\
			0     & 0 & 0 & 0 & 0 & 0 &1 &0 &0&1\\
			0     & 0 & 0 & 0 & 0 & 0 &0 &1&0&n_{4}\\
			0     & 0 & 0 & 0 & 0 & 0 &0 &0 &0&1\\
			0     & 0 & 0 & 0 & 0 & 0 &n_{4} &0 &1&n_{5}\\
		\end{array}\right],
	\end{array}
\end{equation}

and
{\small 
\begin{equation}
	L_{5}(G_{n_{1}, \ldots, n_{5}})=\left[\begin{array}{cccccccccc}
		-n_{2}-n_{4} & 0 &0 &0 &n_{2} & 0 & 0 & 0 &0 &0\\
		0 & -1 & 0 & 0 & 0 & 0 & 0 & 0 & 0 & 0 \\
		0 & 0 & n_{2}(n_{1}+n_{3}) & 0 & 0 & 0 & 0 & 0 & 0 & 0 \\
		0 & 0 & 0 & -1 & 0 & 0 & 0 & 0 & 0 & 0 \\
		0 & 0 & 0 & 0 & n_{2}n_{3} & 0 & 0 & 0 &  n_{2} &0\\
		0 & 0 & 0 & 0 & 0 & -1 & 0 & 0 & 0 & 0 \\
		0 & 0 & 0 & 0 & 0 & 0 & n_{4}(n_{3}+n_{5}) & 0 & 0 & 0 \\
		0 & 0 & 0 & 0 & 0 & 0 & 0 & -1 & 0 & 0 \\
		0 & 0 & 0 & 0 & 0 & 0 & 0 & 0 &  -n_{4} &0\\
		0 & 0 & 0 & 0 & 0 & 0 & 0 & 0 & 0 & 0 
	\end{array}\right] .
\end{equation}	
}
	


By the row and column operations, we can further reduce  $	L_{5}(G_{n_{1}, \ldots, n_{5}})$ to obtain $ \left[\begin{array}{cc}
	L_{6} & O \\
	O & L_{7}
\end{array}\right]$, where $L_{6}$ is a diagonal matrix, and $ L_{7}=\left[\begin{array}{ccc}
-n_{2}-n_{4} & n_{2} & 0 \\
0 & n_{2}n_{4} &n_{2}\\
0 &0 &-n_{4}
\end{array}\right]$. By calculating the Smith normal form of $ L_{7}$, we get the invariant factors $ \sigma_{1}, \sigma_{2}/\sigma_{1}, $ $ det(L_{7})/\sigma_{2} $. Then we obtain the following theorem.

\begin{theorem}
	For the graph $ G_{n_{1},n_{2},n_{3},n_{4},n_{5}}$, its critical group can be decomposed as following 
	\begin{equation}
		\begin{split}
			\mathbb{Z} \oplus K(G_{n_{1},n_{2},n_{3},n_{4},n_{5}})\cong &
			\mathbb{Z} /  (n_{2} \mathbb{Z})^{\oplus(n_{1}-2)} \oplus \mathbb{Z} /  ((n_{1}+n_{3}) \mathbb{Z})^{\oplus(n_{2}-2)} \\
			& \oplus \mathbb{Z} /  ((n_{2}+n_{4}) \mathbb{Z})^{\oplus(n_{3}-2)} \oplus \mathbb{Z} /  ((n_{3}+n_{5}) \mathbb{Z})^{\oplus(n_{4}-2)} \\
			&\oplus \mathbb{Z} /  (n_{4} \mathbb{Z})^{\oplus(n_{5}-2)}  \oplus \mathbb{Z} /  (n_{2}(n_{1}+n_{3}) \mathbb{Z})\oplus \mathbb{Z} /  (n_{4}(n_{3}+n_{5})) \mathbb{Z}) \\&\oplus \mathbb{Z} /  (\sigma_{1} \mathbb{Z})\oplus \mathbb{Z} /  ((\sigma_{2}/\sigma_{1}) \mathbb{Z}) \oplus 	\mathbb{Z} /  ((n_{2}n_{3}n_{4}(n_{2}+n_{4}) /\sigma_{2}) \mathbb{Z}),
		\end{split}
	\end{equation} 
	where	\begin{align*}
		&\sigma_{1}=gcd(n_{2},n_{4},n_{2}+n_{4},n_{2}n_{3}), \\ 
		& \sigma_{2}=gcd(n_{2}^{2} , n_{2}n_{4} , n_{2}n_{3}n_{4} , n_{2}(n_{2}+n_{4}) , n_{4}(n_{2}+n_{4}) , n_{2}n_{3}(n_{2}+n_{4}) ).
	\end{align*} 	
\end{theorem}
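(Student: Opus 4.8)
The plan is to run the general reduction scheme of Section~\ref{sec2} with $k=5$ and finish with an explicit Smith normal form computation on a single $3\times 3$ block. First I would apply Proposition~2.1 at $k=5$, which splits off the summand $\bigoplus_{i=1}^{5}\mathbb{Z}/(N_i\mathbb{Z})^{\oplus(n_i-2)}$ and reduces the claim to identifying $\operatorname{coker} L_3$. Substituting the values $N_1=n_2$, $N_2=n_1+n_3$, $N_3=n_2+n_4$, $N_4=n_3+n_5$, $N_5=n_4$ reproduces the first three lines of the asserted decomposition, so everything hinges on the remaining cyclic factors coming from $\operatorname{coker} L_3$.

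Next I would carry out the two integral changes of basis already recorded in the excerpt. Applying $L_4=P_2L_3Q_2$ with $P_2,Q_2\in GL_{2k}(\mathbb{Z})$ produces the explicit $10\times 10$ upper-triangular matrix $L_4(G_{n_1,\ldots,n_5})$, and then $L_5=P_3L_4Q_3$ with the displayed $P_3(G_{n_1,\ldots,n_5}),Q_3(G_{n_1,\ldots,n_5})$ brings it to the nearly diagonal $L_5$. The only care here is to confirm that $P_3$ and $Q_3$ are genuinely unimodular (each a product of the three admissible integer operations, equivalently $\det=\pm1$) and that the stated product really equals $L_5$; this is a routine but error-prone entrywise check that preserves the cokernel at each stage.

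The decisive step is reading the invariant factors off $L_5$. Inspecting its support shows that coordinates $2,3,4,6,7,8,10$ carry no off-diagonal entries, so after a permutation $L_5$ becomes block diagonal: a diagonal block $L_6$ and the $3\times 3$ block $L_7$ sitting on the entangled coordinates $1,5,9$. From $L_6$ the four $\pm1$ entries contribute trivial factors, the single $0$ supplies the free rank-one part of $\mathbb{Z}^{|V|}/\operatorname{im} L$ (that is, the $\mathbb{Z}$ summand on the left-hand side), and the entries $n_2(n_1+n_3)$ and $n_4(n_3+n_5)$ give the cyclic factors $\mathbb{Z}/(n_2(n_1+n_3)\mathbb{Z})$ and $\mathbb{Z}/(n_4(n_3+n_5)\mathbb{Z})$ directly. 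The coordinates $1,5,9$ do not decouple, because of the two off-diagonal entries equal to $n_2$, leaving the upper-triangular block $L_7$, whose determinant equals $n_2n_3n_4(n_2+n_4)$.

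Here is where the main obstacle lies: $L_7$ cannot be diagonalized by inspection for general $n_i$, so I would obtain its three invariant factors from the determinantal-divisor characterization of the Smith normal form, namely $d_1=\gcd$ of the entries, $d_1d_2=\gcd$ of the $2\times 2$ minors, and $d_1d_2d_3=|\det L_7|$. Writing $\sigma_1=d_1$ and $\sigma_2=d_1d_2$ and enumerating the six nonzero $2\times 2$ minors (being careful to discard the minors that vanish on the zero off-diagonal positions) yields exactly the stated gcd formulas for $\sigma_1$ and $\sigma_2$, so the invariant factors are $\sigma_1$, $\sigma_2/\sigma_1$, and $n_2n_3n_4(n_2+n_4)/\sigma_2$. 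Assembling these three factors with the two cyclic factors from $L_6$ and the $N_i$-factors from Proposition~2.1 then gives precisely the claimed structure, the free $\mathbb{Z}$ of the cokernel matching the $\mathbb{Z}$ summand on the left-hand side.
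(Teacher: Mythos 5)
Your proposal follows essentially the same route as the paper: Proposition 2.1 at $k=5$, the unimodular reductions $L_{4}=P_{2}L_{3}Q_{2}$ and $L_{5}=P_{3}L_{4}Q_{3}$, splitting $L_{5}$ into a diagonal block $L_{6}$ plus the $3\times 3$ block $L_{7}$ on the coordinates coupled by the two off-diagonal entries $n_{2}$, and extracting the invariant factors of $L_{7}$ via its determinantal divisors, which is exactly where the paper's $\sigma_{1}$, $\sigma_{2}/\sigma_{1}$, and $\det(L_{7})/\sigma_{2}$ come from. Your minor enumeration is correct and in fact pins the central entry of $L_{7}$ as $n_{2}n_{3}$ (consistent with $\det L_{7}=n_{2}n_{3}n_{4}(n_{2}+n_{4})$ and with the six gcd arguments in $\sigma_{2}$), whereas the paper's displayed $L_{7}$ carries an apparent typo $n_{2}n_{4}$ in that position.
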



\section{Discussion}\label{Discussion}
With the above method, for $k=6$, 
\begin{equation}
	\begin{split}
		\mathbb{Z} \oplus K(G_{n_{1},\cdots ,n_{6}})\cong &
		\mathbb{Z} /  (n_{2} \mathbb{Z})^{\oplus(n_{1}-2)} \oplus \mathbb{Z} /  ((n_{1}+n_{3}) \mathbb{Z})^{\oplus(n_{2}-2)} \\
		& \oplus \mathbb{Z} /  ((n_{2}+n_{4}) \mathbb{Z})^{\oplus(n_{3}-2)} \oplus \mathbb{Z} /  ((n_{3}+n_{5}) \mathbb{Z})^{\oplus(n_{4}-2)} \\
		&  \oplus \mathbb{Z} /  ((n_{4}+n_{6}) \mathbb{Z})^{\oplus(n_{5}-2)}  \oplus \mathbb{Z} /  (n_{5} \mathbb{Z})^{\oplus(n_{6}-2)} \\
		& \oplus \mathbb{Z} /  (n_{2}(n_{1}+n_{3}) \mathbb{Z}) \oplus \mathbb{Z} /  (n_{5}(n_{4}+n_{6})) \mathbb{Z}) \oplus \mathbb{Z} /  (\sigma_{1} \mathbb{Z}) \\&\oplus \mathbb{Z} /  ((\sigma_{2}/\sigma_{1}) \mathbb{Z})\oplus 	\mathbb{Z} /  ((n_{2}n_{3}n_{4}n_{5}(n_{2}+n_{4})(n_{3}+n_{5}) /\sigma_{2}) \mathbb{Z}).
	\end{split}
\end{equation} 
where \begin{equation*}
	\begin{split}
		\sigma_{1}=&gcd(n_{2}n_{3}, n_{2}n_{5}, n_{3}(n_{2}+n_{4}), n_{5}(n_{2}+n_{4}), n_{2}(n_{3}+n_{5}), n_{4}(n_{3}+n_{5})),\\
		\sigma_{2}=& gcd(n_{2}n_{3}^{2}(n_{2}+n_{4}) , n_{2}n_{3}n_{5}(n_{2}+n_{4}), n_{2}n_{3}(n_{2}+n_{4})(n_{3}+n_{5}), n_{2}^{2}n_{5}(n_{3}+n_{5}),  \\
		& n_{5}(n_{2}+n_{4})^{2}(n_{3}+n_{5}) ) .
	\end{split}
\end{equation*}

In this paper, we study the  critical group of the $k$-partite graph $G_{n_{1}, \ldots, n_{k}}$. First of all, we obtain the algorithm of the critical group $K(G_{n_{1}, \ldots, n_{k}})$ for the  arbitrary $k$.  When $k = 2$, $G_{n_{1}, n_{2}}$  is a completely bipartite graph, and our conclusion is consistent with the result in \cite{lorenzini1991finite}. Then the decompositions of the critical groups of $k$-partite graphs are given for the cases $k = 3, 4, 5, $ and $ 6$.

For further research, we have two questions. 

Question I: Based on the $k$-partite graphs in this paper, randomly deleting some edges, how to calculate the critical groups of the modified graphs? 

Question II: What is the solution to compute the critical groups for the arbitrary incomplete multi-partite graphs? 
\section{Acknowledgments} 
	We would like to show our great gratitude to the anonymous referees for carefully reading this manuscript and improving its presentation and accuracy. The corresponding author is supported by National Science Fund for Distinguished Young Scholars 12201029. 

\bibliographystyle{elsarticle-num} 
\bibliography{mybibfile}

\end{document}